\newcommand{\mcm}[3]{\newcommand{#1}[#2]{{\ensuremath{#3}}}} 
\mcm{\tuple}{1}{\langle #1 \rangle}
\mcm{\name}{1}{\ulcorner #1 \urcorner}
\mcm{\Nbb}{0}{\mathbb{N}}
\mcm{\Zbb}{0}{\mathbb{Z}}
\mcm{\Rbb}{0}{\mathbb{R}}
\mcm{\Cbb}{0}{\mathbb{C}}
\mcm{\Qbb}{0}{\mathbb{Q}}
\mcm{\Bcal}{0}{\cal B}
\mcm{\Ccal}{0}{\cal C}
\mcm{\Dcal}{0}{\cal D}
\mcm{\Ecal}{0}{\cal E}
\mcm{\Fcal}{0}{\cal F}
\mcm{\Gcal}{0}{\cal G}
\mcm{\Hcal}{0}{\cal H}
\mcm{\Ical}{0}{\cal I}
\mcm{\Jcal}{0}{\cal J}
\mcm{\Kcal}{0}{\cal K}
\mcm{\Lcal}{0}{\cal L}
\mcm{\Mcal}{0}{\cal M}
\mcm{\Ncal}{0}{\cal N}
\mcm{\Ocal}{0}{{\cal O}}
\mcm{\Pcal}{0}{{\cal P}}
\mcm{\Qcal}{0}{{\cal Q}}
\mcm{\Rcal}{0}{{\cal R}}
\mcm{\Scal}{0}{{\cal S}}
\mcm{\Tcal}{0}{{\cal T}}
\mcm{\Ucal}{0}{{\cal U}}
\mcm{\Vcal}{0}{{\cal V}}
\mcm{\Wcal}{0}{{\cal W}}
\mcm{\Xcal}{0}{{\cal X}}
\mcm{\Ycal}{0}{{\cal Y}}
\mcm{\Mfrak}{0}{\mathfrak M}
\mcm{\restric}{0}{\upharpoonright}
\mcm{\upset}{0}{\uparrow}
\mcm{\onto}{0}{\twoheadrightarrow}
\mcm{\smallNbb}{0}{{\small \mathbb{N}}}
\DeclareMathOperator{\preop}{op}
\mcm{\op}{0}{^{\preop}}
\newcommand{\se}{\subseteq}
\newcommand{\theoremize}[2]{\newaliascnt{#1}{thm} \newtheorem{#1}[#1]{#2} \aliascntresetthe{#1}}
\theoremstyle{plain}
\newtheorem{thm}{Theorem}[section]
\theoremstyle{definition}
\theoremstyle{plain}
\title{\scshape A short proof that every finite graph has a tree-decomposition displaying its tangles}
\author{Johannes Carmesin}
\newcommand{\ct}{^\complement}
\begin{document}

\maketitle

\begin{abstract}
We give a short proof that every finite graph (or matroid) has a tree-decomposition that displays all maximal tangles.

This theorem for graphs is a central result of the graph minors project of Robertson and Seymour 
and the extension to matroids is due to Geelen, Gerards and Whittle.
\end{abstract}

\section{Introduction}
Robertson and Seymour \cite{GMX} proved as a corner stone of their graph minors project:
\begin{thm}[rough version]\label{RS:tangle_tree}
Every graph\footnote{In this paper all graphs and matroids are finite.}
has a tree-decomposition whose separations distinguish all maximal tangles.
\end{thm}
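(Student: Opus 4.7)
The plan is to construct the tree-decomposition from a nested family of separations that, between them, distinguish every pair of maximal tangles; then to appeal to the standard correspondence between nested separation systems and tree-decompositions. Recall that two separations $(A,B)$ and $(C,D)$ are \emph{nested} when one of the four corners $A\cap C$, $A\cap D$, $B\cap C$, $B\cap D$ gives a side comparable to $A$ (equivalently, when the separations do not cross), and that any nested set of separations arises in a canonical way as the set of edge-separations of a tree-decomposition.

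First I would fix the framework: a tangle of order $k$ is a consistent orientation of all separations of order $<k$ in which no three ``small'' sides cover the graph (or, for matroids, the ground set); two tangles are \emph{distinguished} by a separation when they orient it oppositely; and a maximal tangle is one admitting no proper extension to a larger order. The goal is a tree-decomposition such that for every pair of distinct maximal tangles some edge-separation of the tree distinguishes them. For each such pair I would select a distinguishing separation of minimum possible order, and call $\Scal$ the set of all separations arising in this way.

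The heart of the argument is an uncrossing lemma for $\Scal$: if $(A,B),(C,D)\in\Scal$ cross, submodularity of the order function forces at least one of the two diagonal corners, say $(A\cap C,B\cup D)$, to have order no larger than $(A,B)$ and $(C,D)$. Using the tangle axiom that three small sides cannot cover the ground set, I would argue that for the specific pair of tangles the original $(A,B)$ was chosen to distinguish, the corner sits on the tangle side determined by orientation comparison, and hence still distinguishes that pair. Iterating with a potential function --- for instance, the number of crossings weighted by order --- I would extract a nested subfamily $\Ncal\se\Scal$ that still distinguishes every pair of maximal tangles.

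Converting $\Ncal$ into a tree-decomposition is then routine, and verifying that its edge-separations distinguish all maximal tangles is immediate from the choice of $\Ncal$. The main obstacle will be the uncrossing step: one must verify that whenever two minimum-order distinguishing separations cross, the tangle axioms genuinely \emph{force} a particular corner to lie on the correct side for the targeted tangle pair, and that the replacement does not disrupt the distinguishing property for other pairs previously handled. This is the point where the ``no three small sides cover'' axiom is essential, and the short proof will presumably gain its brevity by a clever global choice of $\Ncal$ --- perhaps by a minimality argument on the whole family at once --- rather than by carrying out the uncrossing pair by pair.
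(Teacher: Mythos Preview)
Your outline follows the classical uncrossing strategy: collect a minimum-order distinguisher for each pair of maximal tangles, then iteratively replace crossing pairs by corners. This can be made to work --- it is essentially the route of Robertson--Seymour and of Geelen--Gerards--Whittle, via their notion of robust separations --- but the obstacle you flag at the end is real and your sketch does not resolve it. Showing that a corner replacement still distinguishes the targeted pair \emph{and} decreases some well-founded potential \emph{and} does not undo progress on other pairs is exactly where the old proofs spend their effort.

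The paper sidesteps the iteration by proving a stronger statement: take \emph{any} maximal nested set $N$ of separations, each of which efficiently distinguishes \emph{some} pair of tangles; then $N$ already efficiently distinguishes \emph{every} pair of maximal tangles. The proof fixes two maximal tangles $\Pcal,\Qcal$, chooses an efficient distinguisher $(A,B)$ that crosses as few members of $N$ as possible, and argues that $(A,B)$ crosses nothing in $N$ (hence lies in $N$ by maximality). If some $(C,D)\in N$ crossed $(A,B)$, one splits into the cases $o(A,B)\ge o(C,D)$ and $o(A,B)<o(C,D)$; submodularity and the three-small-sides axiom then force appropriate corners either to contradict the extremal choice of $(A,B)$ or to contradict that $\Pcal$, $\Qcal$ (or the tangles witnessing $(C,D)$) are tangles at all. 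A short auxiliary lemma --- that a corner of two crossing separations is nested with anything nested with both --- ensures the corners genuinely have fewer crossings with $N$.

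So your plan is the old proof, not the short one. Your closing guess is on target: the brevity comes from a single extremal choice rather than a pairwise loop. But the extremal parameter is not a minimality condition on your family $\Scal$; it is the number of crossings of one candidate distinguisher against a \emph{fixed} maximal nested $N$, and the punchline is that $N$ itself, not some carefully uncrossed subset of $\Scal$, already does the job.
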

Additionally, it can be ensured that this tree-decomposition separates the tangles in a `minimal way'.
This theorem was extended to matroids by Geelen, Gerards and Whittle \cite{GGW:tangles_in_matroids}.
Here we give a short proof of both of these results. A key idea is that we prove the following strengthening:

\begin{thm}[rough version of \autoref{tangle-tree_strong} below]\label{thm_intro}
Any tree-decomposition such that each of its 
separations distinguishes two tangles in a minimal way
can be extended to a tree-decomposition that distinguishes 
any two maximal tangles in a minimal way. 
\end{thm}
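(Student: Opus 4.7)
The plan is to proceed by induction on the number of unordered pairs $\{\tau_1,\tau_2\}$ of maximal tangles that are not distinguished by any separation of the given tree-decomposition $T$. If this number is zero, then $T$ already is as required. Otherwise I aim to add a single separation to $T$ so that the resulting tree-decomposition still has the property that each of its separations distinguishes some pair of tangles in a minimal way, and strictly more pairs of maximal tangles are now distinguished; induction then concludes.

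For the inductive step, pick a pair $\tau_1,\tau_2$ of maximal tangles that $T$ fails to distinguish. Since $\tau_1\ne\tau_2$, some separation distinguishes them; choose such a separation $(A,B)$ of minimum possible order. The key point, whose verification I postpone to the next paragraph, is that $(A,B)$ can be chosen nested with every separation of $T$. Granting this, $(A,B)$ can be inserted at the appropriate node of $T$ to produce a tree-decomposition $T'$ whose set of separations is exactly that of $T$ together with $(A,B)$. Every old separation still distinguishes, minimally, the same pair it did before, since minimality is a property of the separation together with the pair of tangles and is intrinsic to the graph (or matroid), independent of the ambient tree-decomposition; the new separation $(A,B)$ distinguishes $\tau_1,\tau_2$ minimally by construction. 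Hence $T'$ satisfies the hypothesis of the theorem, and the pair $\tau_1,\tau_2$ (together with possibly further pairs) is newly distinguished.

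The heart of the argument, and the step I expect to be the main obstacle, is producing a minimum-order distinguisher of $\tau_1,\tau_2$ that is nested with all of $T$. I would prove this by an uncrossing argument. Starting from any minimum-order distinguisher $(A,B)$ of $\tau_1,\tau_2$, suppose it crosses some separation $(C,D)$ of $T$. By hypothesis $(C,D)$ distinguishes some pair $\sigma_1,\sigma_2$ of tangles in a minimal way, and $\{\sigma_1,\sigma_2\}\ne\{\tau_1,\tau_2\}$ because $(C,D)$ does not distinguish $\tau_1,\tau_2$. Submodularity of the order function applied to the four corner separations built from $(A,B)$ and $(C,D)$ yields a pair of opposite corners whose orders sum to at most the sum of the orders of $(A,B)$ and $(C,D)$. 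Using the tangle axioms to orient each of $\tau_1,\tau_2,\sigma_1,\sigma_2$ on all four corners, one performs a case analysis showing that some corner $(A',B')$ either has order strictly less than that of $(A,B)$ while still distinguishing $\tau_1,\tau_2$ (contradicting minimality of $(A,B)$), or has the same order, distinguishes $\tau_1,\tau_2$, and is nested with $(C,D)$; minimality of $(C,D)$ as a distinguisher of $\sigma_1,\sigma_2$ is used to rule out the remaining bad configurations. Replacing $(A,B)$ by $(A',B')$ strictly decreases the number of separations of $T$ crossed by our distinguisher, so iteration terminates with the desired nested minimum-order distinguisher.

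The delicate point in the corner analysis is the tangle-axiom bookkeeping: one must use the forbidden-triple axiom (no three "small" sides covering the whole structure) to exclude the configurations in which every corner fails to distinguish $\tau_1,\tau_2$, or in which every low-order corner ends up on the wrong side of $(C,D)$ for the minimality of $(C,D)$ to survive. This is where tangles are used essentially, rather than arbitrary orientations of the separation system. Apart from this key uncrossing lemma, the induction and the routine tree-decomposition bookkeeping complete the proof.
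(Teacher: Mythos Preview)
Your approach is essentially the paper's: pick an efficient distinguisher $(A,B)$ for an as-yet-undistinguished pair of maximal tangles, minimise the number of separations of the current tree-decomposition that it crosses, and use submodularity together with the tangle axioms on the corners with a crossing $(C,D)$ to derive a contradiction. The paper organises the corner analysis into two clean cases, $o(A,B)\geq o(C,D)$ versus $o(A,B)<o(C,D)$, which is exactly the ``case analysis'' you allude to.

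There is, however, one genuine gap in your uncrossing step. You assert that replacing $(A,B)$ by a suitable corner $(A',B')$ \emph{strictly decreases} the number of separations of $T$ it crosses, but you only verify that $(A',B')$ is nested with $(C,D)$. You never check that $(A',B')$ remains nested with every separation of $T$ that $(A,B)$ was already nested with; a priori the corner could uncross $(C,D)$ at the cost of newly crossing something else in $T$, and then your iteration need not terminate. The paper isolates precisely this point as a separate lemma: if $(A,B)$ and $(C,D)$ cross but $(E,F)$ is nested with both, then any corner such as $(A\cap C,B\cup D)$ is nested with $(E,F)$. This is short to prove (a few lines of set-inclusion bookkeeping), but it is not automatic and is the hinge on which the progress measure turns. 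Once you insert this lemma, your argument goes through and coincides with the paper's.

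A minor remark: your aside that $\{\sigma_1,\sigma_2\}\ne\{\tau_1,\tau_2\}$ is not actually needed anywhere in the corner analysis, and the paper does not use it; the two cases above proceed uniformly regardless of whether $(C,D)$ happens to (inefficiently) distinguish $\tau_1,\tau_2$.
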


Our new proof does not yield the strengthening of \autoref{RS:tangle_tree} proved in \cite{CDHH:profiles}. 
However, it can be extended from tangles to profiles, compare \autoref{profile_rem}. 
For tree-decompositions as in \autoref{RS:tangle_tree} that additionally have as few parts as possible see 
\autoref{tangle_tree_cor}.

\section{Notation}

Throughout we fix a finite set $E$. A \emph{separation} is a bipartition $(A,B)$ of $E$, and $A$ and $B$ are called the \emph{sides} of $(A,B)$. 
A function $f$ mapping subsets of $E$ to the integers is \emph{symmetric} if $f(X)=f(X\ct)$ for every $X\se E$, and it is \emph{submodular} if $f(X)+f(Y)\geq f(X\cap Y) +f(X\cup Y)$ for every $X,Y\se E$. 
Throughout we fix such a function $f$. Since $f$ is symmetric, it induces a function $o$ on the separations: $o(A,B)=f(A)=f(B)$, which we call the \emph{order} of a separation.\footnote{For the sake of readability, we write $o(A,B)$ instead of $o((A,B))$.}  
Since $f$ is submodular $o$ satisfies:
\begin{equation}\label{lemma1_xyz}
  o(A,B)+o(C,D)\geq o(A\cap C, B\cup D)+ o(A\cup C, B\cap D)
\end{equation}

For example, one can take for $E$ the edge set of a matroid and for $f$ its connectivity function. Or one can take for $E$ the edge set of a graph, 
where the order of a separation $(A,B)$ is the number of vertices incident with edges from both $A$ and $B$.

A \emph{tangle} of order $k+1$ picks a \emph{small} side of each separation $(A,B)$ of order at most $k$
such that no three small sides cover $E$. 
Moreover, the complement of a single element of $E$ is never small.\footnote{This `moreover'-property is never used in our proofs and thus the results are also true for a slightly bigger class. However, the new objects are trivial.} 
In particular, if $A$ is small, then its complement $B$ cannot be included in a small set and we say that $B$ is \emph{big}. 
Thus a tangle can be thought of as pointing towards a highly connected piece, which `lies' on the big side of every low of order separation.
In this spirit, we shall also say that a tangle $\Tcal$ \emph{orients} a separation $(A,B)$ towards $B$ if $B$ is big in $\Tcal$.

A tangle is \emph{maximal} if it is not included in any other tangle (of higher order). 
A separation $(A,B)$ \emph{distinguishes} two tangles if these tangles pick different small sides for $(A,B)$.
It distinguishes them \emph{efficiently} if it has minimal order amongst all separations distinguishing these two tangles. 
\vspace{0.3 cm}

A \emph{tree-decomposition} consists of a tree $T$ and a partition $(P_t|t\in V(T))$ of $E$ consisting of one (possibly empty) partition class for every vertex of $T$. For $X\se V(T)$, we let $S(X)=\bigcup_{t\in X} P_t$.
There are two separations \emph{corresponding} to each edge $e$ of $T$, namely $(S(X), S(Y))$ and $(S(Y), S(X))$.
Here $X$ and $Y$ are the two components of $T-e$. 
We say that a tree-decomposition \emph{distinguishes two tangles efficiently} if there is a separation corresponding to an edge of the decomposition-tree distinguishing these tangles efficiently.

The following implies \autoref{RS:tangle_tree} and its matroid counterpart mentioned in the Introduction if we plug in the particular choices for the order function mentioned above.\footnote{In \cite{GMX}, the authors use a slightly different notion of separation for graphs.
From a separation $(A,B)$ in the sense of this paper, the corresponding separation in their setting is $(V(A), V(B))$, where $V(X)$ denotes the set of vertices incident with edges from $X$.
However, it is well-known that these two notions of separations give rise to the same notion of tangle and so \autoref{tangle_tree} implies their version. }
\begin{thm}\label{tangle_tree}
Let $E$ be a finite set with an order function.
Then there is a tree-decomposition distinguishing any two maximal tangles efficiently. 
\end{thm}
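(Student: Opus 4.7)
I would derive \autoref{tangle_tree} from \autoref{thm_intro} (\autoref{tangle-tree_strong}) by applying the strong statement to the trivial tree-decomposition with a single vertex and part $E$, which vacuously satisfies the hypothesis. For the strong statement I would induct on the (finite) number of pairs of maximal tangles not yet distinguished by the current tree-decomposition $\Tcal$. If this count is positive, pick two maximal tangles $\Pcal$ and $\Qcal$ that $\Tcal$ fails to distinguish. Since $\Pcal$ and $\Qcal$ orient every existing edge-separation the same way, they both point to a unique node $t_0$ of $T$. The inductive step reduces to producing an efficient distinguisher $(A^*, B^*)$ of $\Pcal, \Qcal$ that is \emph{aligned} with $\Tcal$, meaning that for every neighbour $s$ of $t_0$ the union $C_s$ of parts in the $s$-subtree lies entirely on one side of $(A^*, B^*)$. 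Such a separation can be inserted at $t_0$ by splitting $t_0$ into two new nodes carrying the parts $A^* \cap P_{t_0}$ and $B^* \cap P_{t_0}$, adding the edge between them, and attaching each neighbour $s$ of $t_0$ to the new node on the same side of $(A^*, B^*)$ as $C_s$. The resulting tree-decomposition extends $\Tcal$, the new edge-separation is an efficient distinguisher of $\Pcal, \Qcal$, and the count of undistinguished pairs has strictly decreased.

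\textbf{Uncrossing via submodularity.} To build $(A^*, B^*)$, start from an arbitrary efficient distinguisher $(A, B)$ of $\Pcal, \Qcal$, oriented so that $A$ is $\Pcal$-small and $B$ is $\Qcal$-small, and uncross $(A, B)$ with each $(C_s, D_s)$ in turn, where $C_s$ is the side of $(C_s, D_s)$ deemed small by both tangles. A short case analysis using only the tangle axiom ``no three small sides cover $E$'' shows that among the four corners obtained from $(A,B)$ and $(C_s, D_s)$, exactly $(A \cup C_s, B \cap D_s)$ is a distinguisher of $\Pcal, \Qcal$, while $(A \cap C_s, B \cup D_s)$ is small for both tangles. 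Submodularity \eqref{lemma1_xyz} then yields
\[
o(A \cup C_s, B \cap D_s) \le o(A, B) + o(C_s, D_s) - o(A \cap C_s, B \cup D_s),
\]
so replacing $(A, B)$ by $(A \cup C_s, B \cap D_s)$ preserves efficiency exactly when $o(A \cap C_s, B \cup D_s) \ge o(C_s, D_s)$. After such a replacement the whole $s$-subtree lies on the $A$-side and, crucially, further uncrossings only enlarge the $A$-side by sets disjoint from the previously absorbed $C_{s'}$'s, so the alignment obtained at one edge survives all subsequent uncrossings. Finitely many steps therefore deliver the aligned efficient distinguisher $(A^*, B^*)$.

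\textbf{Main obstacle.} The delicate point is the inequality $o(A \cap C_s, B \cup D_s) \ge o(C_s, D_s)$, which does not follow from the hypothesised efficiency of $(C_s, D_s)$ alone. The natural way to establish it is to show that $(A \cap C_s, B \cup D_s)$ still distinguishes the pair of tangles for which $(C_s, D_s)$ is the efficient separator, since then the bound becomes immediate from that efficiency. Verifying this — presumably by choosing the initial $(A, B)$ to minimise a secondary quantity (for instance, the total order of its crossings with the separations of $\Tcal$), or by running a symmetric uncrossing argument on the pair of tangles witnessing the efficiency of $(C_s, D_s)$ — is where the real work is concentrated; everything else is routine submodular bookkeeping together with the tree-surgery described above.
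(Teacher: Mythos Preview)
Your outline is the classical incremental-uncrossing strategy, and you have correctly located its weak point: the single-step replacement $(A,B)\mapsto(A\cup C_s,B\cap D_s)$ need not preserve efficiency unless $o(A\cap C_s,B\cup D_s)\ge o(C_s,D_s)$, and you do not establish this. Your two proposed rescues remain programmatic. In particular, the second suggestion (show that $(A\cap C_s,B\cup D_s)$ still distinguishes the witnesses $\Rcal,\Scal$ for $(C_s,D_s)$) fails in general: when $o(A,B)<o(C_s,D_s)$ the tangles $\Rcal,\Scal$ orient $(A,B)$ the same way, and if $A$ happens to be the common small side then $A\cap C_s$ is small in both, so the corner does \emph{not} distinguish them. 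So as written the proof has a genuine gap, not just a missing routine verification. (There is also a minor slip: $\Pcal$ and $\Qcal$ need not point to a single node $t_0$ of the current tree, only to overlapping subtrees, though this is easily repaired.)

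The paper avoids this difficulty by replacing the step-by-step uncrossing with a global extremal argument. One works with a \emph{maximal} nested set $N$ of separations each of which efficiently distinguishes some pair of tangles. Given maximal tangles $\Pcal,\Qcal$, choose an efficient distinguisher $(A,B)$ that minimises the \emph{number} of separations in $N$ it crosses. The engine is a small combinatorial lemma: if $(A,B)$ and $(C,D)$ cross but $(E,F)$ is nested with both, then $(E,F)$ is nested with every corner of $(A,B)$ and $(C,D)$. Hence every corner crosses strictly fewer members of $N$ than $(A,B)$ does. A short case analysis on whether $o(A,B)\ge o(C,D)$ or $o(A,B)<o(C,D)$ then shows that some corner is again an efficient distinguisher of $\Pcal$ and $\Qcal$ (or, in the opposite corner, of the witnesses for $(C,D)$, forcing the desired order inequality indirectly), contradicting the minimality of $(A,B)$. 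Thus $(A,B)$ is nested with $N$ and, by maximality, lies in $N$. No iterated uncrossing is needed, and the problematic inequality never has to be proved on its own; it falls out of the extremal choice together with the nestedness-of-corners lemma. Your first suggested fix (``minimise a secondary quantity'') is in spirit exactly this, but the right quantity is the number of crossings with $N$, not the total order of crossings, and once you minimise it the answer is zero and the tree-surgery step becomes unnecessary.
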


Two separations $(A_1,A_2)$ and $(B_1,B_2)$ are \emph{nested}\footnote{Other authors use \emph{laminar} instead.} if $A_i\se B_j$ for some pair $(i,j)\in \{1,2\}\times \{1,2\}$. 
A set of separations is \emph{nested} if any two separations in the set are nested. 
A set of separations $N$ is \emph{symmetric} if $(A,B)\in N$ if and only if $(B,A)\in N$. Note that any nested set $N$ is contained in a nested symmetric set, which consists of those separations $(A,B)$ such that $(A,B)$ or $(B,A)$ is in $N$. 
It is clear that:
\begin{rem}
Given a tree-decomposition, the set of separations corresponding to the edges of the decomposition-tree is nested and symmetric. 
\end{rem}

The converse is also true:
\begin{lem} \label{nested_to_td}[\cite{GGW:tangles_in_matroids}]
For every nested symmetric set $N$ of separations, there is a tree-decomposition such that the separations corresponding to edges of the decomposition-tree are precisely those in $N$.
\end{lem}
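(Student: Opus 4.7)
I would induct on $|N|/2$, the number of unordered pairs in $N$. The base case $N=\emptyset$ is trivial: take the single-vertex tree with part $E$. For the inductive step, pick $(A,B)\in N$ with $A$ minimal under $\se$ among all sides appearing in $N$ (possible since $N$ is finite), set $N':=N\sm\{(A,B),(B,A)\}$, which is still nested and symmetric with two fewer elements, and apply the induction hypothesis to obtain a tree-decomposition $(T',(P'_t))$ whose edge-separations are precisely $N'$. The task is then to graft $(A,B)$ onto $(T',(P'_t))$ as a new tree-edge.

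The heart of the argument is the following claim: some vertex $t_0$ of $T'$ satisfies $A\se P'_{t_0}$. Granted this, I would replace $t_0$ by two adjacent new vertices $t_0^A,t_0^B$, set $P_{t_0^A}:=A$ and $P_{t_0^B}:=P'_{t_0}\sm A$, and reattach all former neighbours of $t_0$ to $t_0^B$, so $t_0^A$ becomes a leaf. The new edge $\{t_0^A,t_0^B\}$ corresponds to $(A,B)$, while every old edge of $T'$ still corresponds to its old separation, because $A\cup(P'_{t_0}\sm A)=P'_{t_0}$ keeps the union of parts on each side of that edge unchanged. Thus the edge-separations of the new tree are exactly $N$.

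To prove the claim, suppose for contradiction that $e_1,e_2\in A$ lie in distinct parts of $T'$. Some edge on the $T'$-path between their parts then yields a separation $(C,D)\in N'$ with $e_1\in C$, $e_2\in D$, so both $C\cap A$ and $D\cap A$ are nonempty. Since $N$ is nested, one of $A\se C$, $A\se D$, $C\se A$, $D\se A$ must hold; the first two are immediately ruled out by these nonempty intersections. Because $(C,D)\in N'$, neither $C=A$ nor $D=A$ (which would make $(C,D)$ equal to $(A,B)$ or $(B,A)$), so in the remaining cases $C$ or $D$ is properly contained in $A$, contradicting the minimality of $A$.

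The main obstacle is exactly this ``$A$ lives in a single part'' claim, which is where the minimality choice of $A$ combines with the nestedness of $N$. Once it is in hand, the splitting step and the verification that the old edge-separations are preserved are routine bookkeeping.
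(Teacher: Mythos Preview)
The paper does not actually prove this lemma; it merely cites \cite{GGW:tangles_in_matroids} for it. So there is no ``paper's own proof'' to compare against.

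Your argument is correct and self-contained. The induction on $|N|/2$ with the choice of a side $A$ that is $\subseteq$-minimal among all sides occurring in $N$ is exactly the right device: minimality forces $A$ to sit inside a single part of the tree-decomposition for $N'$, and the leaf-splitting step then realises $(A,B)$ as a new edge-separation without disturbing any old one. The verification that old edge-separations are preserved is correct because $P_{t_0^A}\cup P_{t_0^B}=P'_{t_0}$, so the bag-union on either side of any old edge is unchanged. The contradiction in the claim is clean: nestedness of $(A,B)$ with $(C,D)$ gives one of the four inclusions $A\subseteq C$, $A\subseteq D$, $C\subseteq A$, $D\subseteq A$; the first two fail since $A$ meets both $C$ and $D$, and the last two force $C=A$ or $D=A$ by minimality, whence $(C,D)\in\{(A,B),(B,A)\}$, which were removed from $N'$. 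One small point you leave implicit: $N$ is finite because $E$ is finite, so the minimal $A$ exists and the induction terminates; and the case $A=\emptyset$ is harmless since then $A$ trivially lies in any part. Otherwise nothing is missing.
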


Hence to prove \autoref{tangle_tree}, it is enough to construct a suitable nested set of separations.
In the old proofs of \cite{GMX} or \cite{GGW:tangles_in_matroids}, the concept of robust separations was introduced in order to find such a set of separations.
We show that basically any nested set of separations works -- as long as it does not contain any useless separations and is maximal with this property:

\begin{thm}\label{tangle-tree_strong}
Let $N$ be any maximal nested set of separations such that each separation in $N$ distinguishes some two tangles efficiently.
Then any two maximal tangles are distinguished efficiently by some separation in $N$.
\end{thm}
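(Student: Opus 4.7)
The plan is a proof by contradiction. Suppose two maximal tangles $\Tcal_1,\Tcal_2$ are not distinguished efficiently by any separation of $N$, and among all separations that do distinguish them efficiently pick $(A,B)$ crossing the fewest separations of $N$. If $(A,B)$ is nested with every separation of $N$, then since $(A,B)$ itself efficiently distinguishes two tangles, $N\cup\{(A,B)\}$ would contradict the maximality of $N$; so $(A,B)\in N$, contrary to hypothesis. Hence some $(C,D)\in N$ crosses $(A,B)$, and by hypothesis $(C,D)$ efficiently distinguishes some pair of tangles $\Scal_1,\Scal_2$.

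The bulk of the argument is an uncrossing: I will show that one of the four corner separations
\[
(A\cap C,B\cup D),\ (A\cup C,B\cap D),\ (A\cap D,B\cup C),\ (A\cup D,B\cap C)
\]
efficiently distinguishes $\Tcal_1,\Tcal_2$ and is nested with $(C,D)$. A standard consequence of the definition of nestedness is that any separation nested with both $(A,B)$ and $(C,D)$ is automatically nested with each of these four corners; hence such a corner crosses strictly fewer separations of $N$ than $(A,B)$ does, contradicting the choice of $(A,B)$.

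To identify the right corner I will use two consequences of the tangle axiom ``no three small sides cover $E$'': first, a subset of a small side (of valid order) is itself small (apply the axiom with repetitions to the small side and to the would-be big complement of the subset); second, no three small sides of one tangle together cover $E$. A case analysis on how $\Tcal_1,\Tcal_2$ orient $(C,D)$ and on how $\Scal_1,\Scal_2$ orient $(A,B)$ reveals, in each case, a pair of diagonally opposite corners such that one distinguishes $\Tcal_1,\Tcal_2$ and the other distinguishes $\Scal_1,\Scal_2$. Efficiency of $(C,D)$ for $\Scal_1,\Scal_2$ gives that the latter corner has order at least $o(C,D)$, so the submodularity inequality
\[
o(A\cap C,B\cup D)+o(A\cup C,B\cap D)\le o(A,B)+o(C,D)
\]
(or its twin obtained by swapping $C$ with $D$) forces the former corner to have order at most $o(A,B)$, hence exactly $o(A,B)$ by efficiency of $(A,B)$ for $\Tcal_1,\Tcal_2$.

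The main obstacle I expect is the bookkeeping in this case analysis. Particularly delicate is the subcase where $(C,D)$ itself distinguishes $\Tcal_1,\Tcal_2$: efficiency of $(A,B)$ and the contradiction assumption together force $o(C,D)>o(A,B)$ strictly (equality would make $(C,D)\in N$ an efficient distinguisher of $\Tcal_1,\Tcal_2$), and a further application of efficiency of $(C,D)$ for $\Scal_1,\Scal_2$ forces $\Scal_1,\Scal_2$ to agree on $(A,B)$ (else $(A,B)$ would distinguish $\Scal_1,\Scal_2$ with strictly smaller order than $(C,D)$). After this reduction the same uncrossing template applies.
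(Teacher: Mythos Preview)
Your overall framework---choose an efficient distinguisher $(A,B)$ of $\Tcal_1,\Tcal_2$ that crosses as few members of $N$ as possible, pick a crossing $(C,D)\in N$, and uncross---is exactly the paper's. The ingredients you list (submodularity, the corner--nestedness lemma, ``subsets of small sides are small'', and the ``no three small sides cover $E$'' axiom) are also the right ones. The gap is in the case analysis itself.

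Your template asserts that some single diagonal pair of corners has the property ``one distinguishes $\Tcal_1,\Tcal_2$, the other distinguishes $\Scal_1,\Scal_2$''. This is not something you can establish directly, for two reasons. First, to say a corner \emph{distinguishes} a pair of tangles you must already know its order is small enough for both tangles to orient it; but the order bound is exactly what you are trying to extract, so the two claims cannot be asserted simultaneously for a single pair. Second, and more seriously, your case analysis ``on how $\Scal_1,\Scal_2$ orient $(A,B)$'' is undefined in the case $o(A,B)\ge o(C,D)$: the tangles $\Scal_1,\Scal_2$ are only guaranteed to have order strictly greater than $o(C,D)$, so they need not orient $(A,B)$ at all. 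In that case there is no way to identify a corner that would distinguish $\Scal_1,\Scal_2$, and your mechanism for bounding the order of the opposite corner breaks down.

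The paper repairs this by splitting on whether $k:=o(A,B)\ge \ell:=o(C,D)$ or $k<\ell$, and the two cases are genuinely asymmetric. When $k\ge\ell$ the argument runs in the \emph{opposite} direction from your template: one first observes that each of two (non-opposite) corners, namely $(A\cup C,B\cap D)$ and $(A\cap D,B\cup C)$, would distinguish $\Tcal_1,\Tcal_2$ if its order were at most $k$ and hence would contradict the minimal-crossing choice of $(A,B)$; therefore both have order $>k\ge\ell$, forcing \emph{their} opposite corners $(A\cap C,B\cup D)$ and $(B\cap C,A\cup D)$ to have order $<\ell$. Efficiency of $(C,D)$ then says neither of these distinguishes $\Scal_1,\Scal_2$, whence $A\cap C$ and $B\cap C$ are both small in $\Scal_2$; together with $D$ they cover $E$, contradicting the tangle axiom. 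Note that this uses \emph{both} diagonals, and the final contradiction lives in $\Scal_2$, not in $\Tcal_1$ or $\Tcal_2$. The case $k<\ell$ is the mirror image. Your sketch does not anticipate this reversal of roles, and a single diagonal pair does not suffice in either case.
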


Since \autoref{tangle-tree_strong} implies \autoref{tangle_tree}, the next section is dedicated to the proof of  \autoref{tangle-tree_strong}.

\section{Proof of \autoref{tangle-tree_strong}}

In our proof we need the following:
\begin{lem}\label{lemma2_xyz}\cite[Lemma 4.20]{C:undom_td_new}
 Let $(A,B)$, $(C,D)$ and $(E,F)$ be separations such that $(A,B)$ and $(C,D)$ are not nested but $(E,F)$ is nested with the other two separations. Then the corner separation $(A\cap C, B\cup D)$ is nested with $(E,F)$.
\end{lem}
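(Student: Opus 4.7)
My plan is to prove \autoref{lemma2_xyz} by a brute case analysis on how $(E,F)$ is nested with each of $(A,B)$ and $(C,D)$. What needs to be shown is that one of the four inclusions
\[ A\cap C\subseteq E,\qquad A\cap C\subseteq F,\qquad B\cup D\subseteq E,\qquad B\cup D\subseteq F \]
holds. The hypothesis that $(E,F)$ is nested with $(A,B)$ forces one of $A\subseteq E$, $A\subseteq F$, $B\subseteq E$, $B\subseteq F$, and nestedness with $(C,D)$ gives one of the four analogues for $C$ and $D$.

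The easy half of the analysis is that whenever one of $A\subseteq E$, $A\subseteq F$, $C\subseteq E$ or $C\subseteq F$ holds, the conclusion is immediate, since $A\cap C$ is contained in both $A$ and $C$ and therefore sits inside $E$ or $F$. So we may assume that the nestedness of $(E,F)$ with $(A,B)$ is witnessed by $B$, meaning $B\subseteq E$ or $B\subseteq F$, and similarly the nestedness with $(C,D)$ is witnessed by $D$.

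If $B$ and $D$ lie on the \emph{same} side of $(E,F)$, then $B\cup D$ is contained in that side and the third or fourth target inclusion holds. The only remaining configurations are the two mixed ones: $B\subseteq E$ together with $D\subseteq F$, or $B\subseteq F$ together with $D\subseteq E$.

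The main point — and the only place the non-nestedness of $(A,B)$ and $(C,D)$ is used — is ruling out these two leftover subcases. Because $(C,D)$ and $(E,F)$ are each bipartitions of the ground set, $D\subseteq F$ is equivalent to $E\subseteq C$; combined with $B\subseteq E$ this yields $B\subseteq C$, so $(A,B)$ and $(C,D)$ would be nested, contradicting the hypothesis. The symmetric mixed case gives $D\subseteq A$ by exactly the same complementation move, again a contradiction. The whole argument is really just careful bookkeeping; the only mildly subtle step is translating an inclusion such as $D\subseteq F$ into $E\subseteq C$, which is precisely what lets the non-nestedness hypothesis do its work.
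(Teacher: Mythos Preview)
Your proof is correct and follows essentially the same approach as the paper's: both arguments reduce to the two ``mixed'' cases where $B$ and $D$ lie on opposite sides of $(E,F)$, and then use complementation (e.g.\ $D\subseteq F \Leftrightarrow E\subseteq C$) to derive $B\subseteq C$ or $D\subseteq A$, contradicting the non-nestedness of $(A,B)$ and $(C,D)$. The only difference is organisational---the paper argues contrapositively (assume the corner is not nested with $(E,F)$ and show all four witness inclusions for $(A,B)$ and $(C,D)$ are forced into the mixed configuration), whereas you do a direct forward case split---but the substance is identical.
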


\begin{proof}
Recall that if $(G,H)$ and $(E,F)$ are nested, then one of $G\se E$, $G\se E\ct$, $G\ct\se E$ or $G\ct\se E\ct$ is true.
If one of $G\se E$ or $G\se E\ct$ is false for $G=A\cap C$, then it is also false for both $G=A$ and $G=C$.
If one of $G\ct\se E$ or $G\ct\se E\ct$ is false for $G=A\cap C$, then it is false for at least one of $G=A$ or $G=C$.
Suppose for a contradiction that $(A\cap C, B\cup D)$ is not nested with $(E,F)$ but $(A,B)$ and $(C,D)$ are.
By exchanging the roles of $(A,B)$ and $(C,D)$ if necessary, we may assume by the above that  $A\ct\se E$ and $C\ct\se E\ct$.
Then $A\ct\se C$, contradicting the assumption that $(A,B)$ and $(C,D)$ are not nested.
\end{proof}

\begin{proof}[Proof of \autoref{tangle-tree_strong}.]
 Let $N$ be any maximal set of separations each distinguishing some two tangles efficiently.
Let $(A,B)$ be a separation distinguishing two maximal tangles $\cal P$ and $\cal Q$ efficiently.
Amongst all such $(A,B)$ we pick one such that the number of separations of $N$ not nested with $(A,B)$ is minimal.
By the maximality of $N$, it suffices to show that $(A,B)$ is nested with $N$.
By symmetry, it suffices to consider the case where $A$ is big in $\Pcal$ and $B$ is big in $\cal Q$. 

Suppose for a contradiction, there is some $(C,D)$ in $N$ not nested with $(A,B)$.
Let $\cal R$ and $\cal S$ be two maximal tangles distinguished efficiently by $(C,D)$ and without loss of generality $D$ is big in $\cal R$ and $C$ is big in $\Scal$.
Let $k$ be the order of $(A,B)$, and $\ell$ the order of $(C,D)$. 

\paragraph{Case 1: $k\geq \ell$.}

Then $\cal P$ and $\cal Q$ orient $(C,D)$. 
If they orient it differently, then $(C,D)$ is a candidate for $(A,B)$ and thus $(A,B)$ must be nested with $N$, which is the desired contradiction.
Since $N$ is maximal, it contains also $(D,C)$. 
Thus by replacing $(C,D)$ by $(D,C)$ if necessary, we may assume that $D$ is big in both $\cal P$ and $\cal Q$. 

Suppose for a contradiction that $(A\cap C, B\cup D)$ has order at least $\ell$.
Then $(A\cup C, B\cap D)$ has order at most $k$  by \autoref{lemma1_xyz}. 
Then $B\cap D$ is big in $\Qcal$ since three small sets cannot cover $E$ and both $B\ct$ and $D\ct$ are small.
On the other hand $B\cap D$ is small in $\Pcal$ since any subset of a small set cannot be big. 
However, by \autoref{lemma2_xyz} the separation $(A\cup C, B\cap D)$ is nested with every separation in $N$ that is nested with $(A,B)$ and additionally with $(C,D)$. This is a contradiction to the choice of $(A,B)$. 
Hence $(A\cap C, B\cup D)$ has order at most $\ell-1$.
By a similar argument $(B\cap C, A\cup D)$ has order at most $\ell-1$. 

The separation $(A\cap C, B\cup D)$ has a too low order to distinguish $\cal R$ and $\cal S$.
Since subsets of small sets cannot be big, $A\cap C$ is small in $\cal R$. Thus $A\cap C$ is also small $\cal S$.
A similar argument gives that $B\cap C$ is small in $\Scal$. 

But then $\Scal$ is not a tangle since its three small sets $D$, $A\cap C$ and $B\cap C$ cover $E$.
This is a contradiction.

\paragraph{Case 2: $k< \ell$.}

Then $\cal R$ and $\cal S$ orient $(A,B)$.
They cannot orient it differently as $(C,D)$ distinguishes them efficiently.
By replacing $(A,B)$ by $(B,A)$ if necessary, we may assume that $B$ is big in both $\cal R$ and $\cal S$. 

Suppose for a contradiction that $(A\cap C, B\cup D)$ has order at least $k+1$.
Then $(A\cup C, B\cap D)$ has order at most $\ell-1$ by \autoref{lemma1_xyz}. 
Then $B\cap D$ is big in $\Rcal$ since three small sets cannot cover $E$ and both $B\ct$ and $D\ct$ are small.
On the other hand $B\cap D$ is small in $\Scal$ since any subset of a small set cannot be big. 
Thus $(A\cup C, B\cap D)$ distinguishes $\Rcal$ and $\Scal$, which contradicts the efficiency of $(C,D)$. 
Hence $(A\cap C, B\cup D)$ has order at most $k$. 
A similar argument gives that $(A\cap D, B\cup C)$ has order at most $k$.

By  \autoref{lemma2_xyz}, these two corner separations are nested with every separation in $N$ that is nested with $(A,B)$ and additionally with $(C,D)$.
Thus by the choice of $(A,B)$, they cannot distinguish $\cal P$ and $\cal Q$.
Since subsets of small sets cannot be big, $A\cap C$ is small in $\cal Q$. So it is also small in $\cal P$.
Similarly, $A\cap D$ is small in $\cal P$.
But then $\Pcal$ is not a tangle since its three small sets $B$, $A\cap C$ and $A\cap D$ cover $E$.
This is a contradiction. 

\vspace{0.3 cm}

Since we derive a contradiction in both cases such a separation $(C,D)$ cannot exist and $(A,B)$ is nested with $N$. Thus since $N$ is maximal, for any two maximal tangles $\cal P$ and $\cal Q$, the set $N$ contains a separation distinguishing them efficiently.  
\end{proof}

\section{Concluding remarks}

\begin{rem}\label{profile_rem}\emph{
 \autoref{tangle-tree_strong} says that if we build a set of separations successively, where at each step we add a separation that is nested with everything 
 so far and distinguishes two tangles in a minimal way, then eventually we will end up with a nested set of separations that distinguishes any two maximal tangles in a minimal way.
 However, we could build our nested set of separations a little more carefully, taking smaller separations first. More precisely, our construction has a $k$-th subroutine for each $k\in \Nbb$ starting with $k=0$. 
At the $k$-th subroutine of our construction we add successively separations of order $k$ that are nested with everything so far and 
 that distinguish two tangles in a minimal way. We continue this until we can no longer proceed. 
  With basically the same proof as above (actually, since we take small separations first, we do not need to consider Case 2), one can show that any construction of this type does not only distinguish all the maximal tangles but more generally all the robust profiles as defined in \cite{CDHH:profiles}. 
 }
\end{rem}

\begin{rem}\label{algo_rem}\emph{
\autoref{tangle-tree_strong} gives rise to the following algorithm to construct a tree-decomposition 
that distinguishes all maximal tangles efficiently. At each step we have a nested set $N$ of 
separations such that each of its separations distinguishes some two tangles efficiently. 
If there are two maximal tangles that are not distinguished efficiently by a separation in $N$, 
our aim is to add some separation to $N$ that is nested with $N$ and distinguishes these 
two tangles efficiently.
\autoref{tangle-tree_strong} guarantees that this will always be possible no matter which 
choices we make on the way. 
 }
\end{rem}

Next we will define what it means for a tangle $\Qcal$ to live in a part of a tree-decomposition $(T,(P_t|t\in V(T)))$. 
If $tu$ is an edge of $T$ we abbreviate by $(S_t,S_u)$ the separation $(S(X_t),S(X_u))$, where $X_t$ is the component of $T-e$ containing $t$ and $X_u$ is the component of $T-e$ containing $u$. 
We say that $\Qcal$ \emph{lives} in a nonempty subgraph $S$ of $T$
if for every $t\in V(S)$ and every edge $tu$ incident with $t$ but not in $S$, the separation $(S_t,S_u)$ is big in $\Qcal$.
Clearly, every tangle $\Qcal$ lives in $T$ and the intersection of two subgraphs in which $\Qcal$ lives is nonempty and $\Qcal$ lives in that intersection.  
Hence there is a smallest subgraph $S(\Qcal)$ of $T$ in which $\Qcal$ lives. 
Clearly, if $\Qcal$ lives in $S$, then $S$ must be connected. So $S(\Qcal)$ is a tree. 
Also note that the order of a separation corresponding to an edge of $S(\Qcal)$ 
cannot be smaller than the order of $\Qcal$.
Hence if for two tangles $\Pcal$ and $\Qcal$, the sets $S(\Pcal)$ and $S(\Qcal)$ intersect, then no separation corresponding to an edge of $T$ distinguishes $\Pcal$ and $\Qcal$.  
We are mostly interested in the case where $S(\Qcal)$ just consists of a single node $t$. In this case we say that $\Qcal$ \emph{lives} in the part $P_t$. 
Our aim is to deduce the following.
\begin{cor}\label{tangle_tree_cor}
Let $E$ be a finite set with an order function.
Then there is a tree-decomposition distinguishing any two maximal tangles efficiently such that in each of its parts lives a maximal tangle. 
\end{cor}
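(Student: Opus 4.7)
The plan is to take a tree-decomposition $(T,(P_t)_{t\in V(T)})$ from \autoref{tangle_tree} that minimises $|V(T)|$, and to argue by contradiction that every part then contains a maximal tangle. Two preliminary observations drive the argument: first, because the decomposition distinguishes all pairs of maximal tangles, the subtrees $S(\Pcal)$ and $S(\Qcal)$ for distinct maximal tangles are disjoint (by the remark preceding the corollary), so at most one maximal tangle lives in any one part; second, in a minimiser every edge of $T$ is \emph{essential}, i.e.\ its separation is the unique efficient distinguisher in $T$ of some pair of maximal tangles -- otherwise the edge could be contracted while still efficiently distinguishing every pair of maximal tangles, contradicting the minimality of $|V(T)|$.

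Suppose for contradiction that some $t \in V(T)$ has no maximal tangle living in $\{t\}$. If $t$ is a leaf, its unique incident edge has separation $(P_t, E \setminus P_t)$, which by essentiality uniquely efficiently distinguishes some max pair $(\Pcal, \Qcal)$; with $P_t$ big in $\Pcal$, the tangle $\Pcal$ orients the only edge at $t$ towards $t$ and hence lives in $\{t\}$ -- contradiction.

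For internal $t$ with neighbours $u_1, \dots, u_m$, let $T_i$ denote the component of $T - t$ containing $u_i$, let $\sigma_i$ be the separation of $tu_i$, and set $M_i = \{\Qcal \text{ maximal}: S(\Qcal) \subseteq T_i\}$. If some $M_{i_0}$ is empty, then no maximal tangle has the $u_{i_0}$-side of $\sigma_{i_0}$ big, so $\sigma_{i_0}$ distinguishes no max pair, contradicting essentiality. So every $M_i$ is non-empty. Let $(\Pcal^i, \Qcal^i)$ be the pair that $\sigma_i$ uniquely efficiently distinguishes, with WLOG $\Qcal^i \in M_i$. I define $j(i) \neq i$ as follows: if $\Pcal^i \in M_k$ for some $k \neq i$, set $j(i) = k$; otherwise $t \in S(\Pcal^i)$ and, since $\Pcal^i$ does not live in $\{t\}$, $S(\Pcal^i) \supsetneq \{t\}$, so I pick any $j(i)$ with $u_{j(i)} \in S(\Pcal^i)$ (noting that $u_i \notin S(\Pcal^i)$ because $\Pcal^i$ orients $\sigma_i$, so $j(i) \neq i$).

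In both cases I claim $o(\sigma_{j(i)}) > o(\sigma_i)$. In the first, $\sigma_{j(i)}$ too distinguishes $(\Pcal^i, \Qcal^i)$, so uniqueness of $\sigma_i$ as efficient distinguisher of the pair forces the strict inequality (and if either tangle fails to orient $\sigma_{j(i)}$ we instead get $o(\sigma_{j(i)}) \ge \mathrm{ord}(\cdot) > o(\sigma_i)$). In the second, $tu_{j(i)} \in S(\Pcal^i)$ forces $o(\sigma_{j(i)}) \ge \mathrm{ord}(\Pcal^i)$ by the order bound on edges of $S(\Pcal^i)$, and $\mathrm{ord}(\Pcal^i) > o(\sigma_i)$ since $\Pcal^i$ orients $\sigma_i$. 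Iterating $j$ on the finite set $\{1, \dots, m\}$ produces a cycle along which $o(\sigma_\cdot)$ strictly increases -- a contradiction. The main obstacle I anticipate is precisely the straddling subcase where $\Pcal^i$'s home strictly contains $t$; it is resolved by choosing $j(i)$ inside that larger subtree and invoking the order bound on edges of $S(\Pcal^i)$ stated in the preamble.
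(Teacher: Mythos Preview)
Your argument is correct. You share the paper's overall strategy---take a minimal tree-decomposition with the efficient-distinguishing property and derive a contradiction at a tangle-free node $t$---but your contradiction step differs from the paper's. The paper (in \autoref{get_small_size}) picks the neighbour $u$ of $t$ with $o(\sigma_{tu})$ \emph{maximal} and shows directly that this single edge is redundant: for any maximal pair $(\Pcal,\Qcal)$ that $\sigma_{tu}$ distinguishes efficiently, one locates another neighbour $r$ of $t$ with $o(\sigma_{tr})\le o(\sigma_{tu})$ such that $\sigma_{tr}$ also distinguishes them efficiently, violating minimality of $N$. You instead assume every edge at $t$ is essential, build the map $j$ with $o(\sigma_{j(i)})>o(\sigma_i)$, and obtain a cycle contradiction; your ``straddling'' subcase (where $t\in S(\Pcal^i)$ properly) is handled via the order bound on edges of $S(\Pcal^i)$. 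The two arguments are dual---the paper exhibits a redundant edge, you show essentiality forces unbounded growth---but the paper's version is shorter because choosing the maximal-order edge up front collapses your iteration to a single step and sidesteps the straddling case. One minor wrinkle worth stating explicitly in your write-up: in a $|V(T)|$-minimiser, distinct edges carry distinct separations (else one could be contracted), so ``unique efficient distinguisher'' is unambiguous at the level of edges.
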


By \autoref{tangle-tree_strong}, it suffices to show the following Lemma.
Given a nested set $N$ of separation, by $\Tcal(N)$ we denote the tree-decomposition of 
the smallest nested symmetric set containing $N$ in the sense of \autoref{nested_to_td}.

\begin{lem}\label{get_small_size}
Let $N$ be a nested set of separations that is minimal with the property that for any two maximal tangles there is a separation in $N$ that distinguishes them efficiently.
Then in each part of $\Tcal(N)$ lives a maximal tangle. Conversely, each maximal tangle lives in a part of $\Tcal(N)$. 
\end{lem}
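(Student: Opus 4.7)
The lemma has two directions. I would prove the converse (``each maximal tangle lives in a single part'') first and then use it to derive the forward direction.

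\emph{Converse direction.}
Suppose for contradiction that a maximal tangle $\Qcal$ has $S(\Qcal)$ containing an edge $tu$; then $o(S_t,S_u)$ is at least the order of $\Qcal$, as noted in the text before the statement. By minimality of $N$, some pair $\Pcal_1, \Pcal_2$ of maximal tangles is efficiently distinguished by $(S_t,S_u)$ and by no other separation in $N$; by symmetry I may assume $S_t$ is big in $\Pcal_1$ and $S_u$ big in $\Pcal_2$. The key observation is that $\Pcal_1$ and $\Pcal_2$ must agree on every separation of order strictly less than $o(S_t,S_u)$: any such disagreement would be a distinguisher of order below the efficient order $o(S_t,S_u)$.

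By the assumption on $N$, the pair $\Qcal, \Pcal_1$ is efficiently distinguished in $N$ by some $(C,D)$ whose order is strictly less than the order of $\Qcal$, and hence below $o(S_t,S_u)$. By the key observation $\Pcal_2$ orients $(C,D)$ the same way as $\Pcal_1$, and both oppositely to $\Qcal$; say $C$ is big in $\Qcal$ and $D$ is big in both $\Pcal_1, \Pcal_2$. Let $ab$ be the edge of $T$ corresponding to $(C,D)$. Since $(C,D)$ has order below that of $\Qcal$, the edge $ab$ is not in $S(\Qcal)$, so $S(\Qcal)$ lies entirely on the side of $ab$ corresponding to $C$, and that side contains both $t$ and $u$. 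Consequently the other side (the one corresponding to $D$) is a connected subtree of $T$ disjoint from $\{t,u\}$, so it sits entirely in the $t$-component or entirely in the $u$-component of $T-tu$. If it sits in the $t$-component, then $S(\Pcal_2)$, which lies in this $D$-side, contradicts $S_u$ being big in $\Pcal_2$; otherwise $\Pcal_1$ yields a symmetric contradiction.

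\emph{Forward direction.}
Fix a vertex $t$ of $T$ and suppose no maximal tangle lives in $P_t$. For each neighbor $u$ of $t$, minimality of $N$ produces a pair $\Pcal_u, \Qcal_u$ of maximal tangles uniquely efficiently distinguished in $N$ by $(S_t,S_u)$, with $S_t$ big in $\Pcal_u$ and $S_u$ big in $\Qcal_u$. By the converse direction $\Pcal_u$ lives at a single vertex, which by assumption is not $t$, so it lies in the subtree of $T$ hanging off $t$ at some neighbor $\phi(u) \neq u$. The path in $T$ between the living vertices of $\Pcal_u$ and $\Qcal_u$ passes through the edge $t\phi(u)$, so the corresponding separation lies in $N$ and distinguishes this pair; by the uniqueness of $(S_t,S_u)$ as efficient distinguisher in $N$, we get $o(S_t,S_{\phi(u)}) > o(S_t,S_u)$. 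Iterating $\phi$ on the finite set of neighbors of $t$ produces a strictly increasing sequence of orders, a contradiction.

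The main obstacle is the converse direction, and within it the geometric step: given the abstract orientations of $\Qcal, \Pcal_1, \Pcal_2$ on $(C,D)$ (obtained from minimality and the key observation), one must argue that the subtree on the $D$-side of $ab$ cannot simultaneously accommodate homes of both $\Pcal_1$ and $\Pcal_2$ once it is forced to one side of $tu$. Everything else is bookkeeping with minimality and with the fact that $\Pcal_1, \Pcal_2$ agree on low-order separations.
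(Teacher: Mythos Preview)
Your proof is correct, but it takes a genuinely different route from the paper's.

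The paper proceeds in the opposite order. It proves the \emph{forward} direction directly: assuming no maximal tangle lives at $t$, it picks a neighbour $u$ of $t$ for which the edge $tu$ has \emph{maximum} order amongst all edges at $t$, and then shows that every pair $\Pcal,\Qcal$ efficiently distinguished by $(S_t,S_u)$ is also efficiently distinguished by some other edge $tr$ at $t$ (using only that $S(\Qcal)\neq\{t\}$ and the maximality of $o(tu)$). This contradicts minimality of $N$ at the single separation $(S_t,S_u)$. The converse direction then drops out in one line by counting: the sets $S(\Qcal)$ are pairwise disjoint (any two maximal tangles are separated by some edge), and the forward direction says every node of $T$ lies in some $S(\Qcal)$, so each $S(\Qcal)$ must be a singleton.

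Your argument reverses the dependency: you first prove the converse by finding a low-order separation $(C,D)$ between $\Qcal$ and one of the witnesses $\Pcal_1$, and then exploit the tree geometry to trap both $\Pcal_1$ and $\Pcal_2$ on the $D$-side of the corresponding edge, which cannot straddle $tu$. Your forward direction then uses the converse to locate $\Pcal_u$ at a single vertex and produce the order-increasing map $\phi$. Note that your forward argument is really the paper's maximality trick in disguise: iterating $\phi$ on a finite set and getting strictly increasing orders is the same contradiction as choosing $u$ of maximum order and finding $\phi(u)$ of larger order.

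The paper's route is shorter overall --- the counting argument for the converse is essentially free once the forward direction is done --- while yours gives an explicit, structural reason why no $S(\Qcal)$ can contain an edge, which is informative in its own right. Both arguments tacitly assume that distinct edges at $t$ induce distinct separations of $N$ (so that ``another edge at $t$'' really gives another element of $N$); this is harmless for the canonical tree-decomposition but is glossed over in both proofs.
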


\begin{proof}
By assumption the subtrees $S(\Qcal)$ for different tangles $\Qcal$ are disjoint. Hence the `conversely'-part follows from the first part.
Suppose for a contradiction, there is a part $P_t$ in which no tangle lives. Let $u$ be a neighbour of $t$ in $T$ such that 
the order of a separation $(A,B)$ corresponding to $tu$ is maximal. 

We will construct for any two tangles $\Pcal$ and $\Qcal$ distinguished efficiently by $(A,B)$ another separation in $N$ that also distinguishes them efficiently. 
Note that $tu$ separates $S(\Pcal)$ and $S(\Qcal)$. 
Since not both $S(\Pcal)$ and $S(\Qcal)$ can contain $t$, we may assume that $t$ is not in $S(\Pcal)$. 
Since $S(\Qcal)$ is not equal to $\{t\}$, there is a neighbour $r$ of $t$ that is different from $u$ such that the edge $tr$ separates a vertex of $S(\Qcal)$ from $S(\Pcal)$. Since the order of a separation corresponding to $tr$ is at most the order of a separation corresponding to $tu$, the node $t$ cannot be in $S(\Qcal)$. Hence a separation corresponding to  $tr$ distinguishes $\Pcal$ and $\Qcal$, and they do so efficiently as $(A,B)$ does. 

Hence $N-(A,B)$ violates the minimality of $N$, which contradicts our assumptions. Hence in each part of $\Tcal(N)$ lives a maximal tangle.
\end{proof}

\section{Acknowledgement}

I thank Reinhard Diestel for valuable comments concerning \autoref{algo_rem}.
\autoref{get_small_size} is a special case of a result in \cite{profiles_newVersion}, 
which proves the tangle-tree theorem in a more abstract setting. 

\vspace{.5cm}

\noindent Address of the author: \newline
University of Cambridge \newline
Wilberforce Road, Cambridge CB3 0WB \newline
E-Mail: j.carmesin@dpmms.cam.ac.uk

\bibliographystyle{plain}
\bibliography{literatur}

\end{document}